\newtheorem{assumption}{Assumption}
\crefname{theorem}{theorem}{theorems}
\crefname{proposition}{proposition}{propositions}
\crefname{lemma}{lemma}{lemmas}
\crefname{corollary}{corollary}{corollaries}
\crefname{remark}{remark}{remarks}
\crefname{assumption}{assumption}{assumptions}
\crefname{definition}{definition}{definitions}
\crefname{algocf}{alg.}{algs.}
\Crefname{algocf}{Algorithm}{Algorithms}
\newcommand{\bsx}{{\boldsymbol{x}}}
\newcommand{\bsy}{{\boldsymbol{y}}}
\newcommand{\bsz}{{\boldsymbol{z}}}
\newcommand{\bsnu}{{\boldsymbol{\nu}}}
\newcommand{\N}{{\mathbb{N}}} 
\newcommand{\R}{{\mathbb{R}}} 
\DeclareSymbolFont{bbold}{U}{bbold}{m}{n}
\DeclareSymbolFontAlphabet{\mathbbold}{bbold}
\newcommand{\calA}{{\mathcal{A}}}
\newcommand{\calF}{{\mathcal{F}}}
\newcommand{\calI}{{\mathcal{I}}}
\newcommand{\calL}{{\mathcal{L}}}
\newcommand{\calO}{{\mathcal{O}}}
\newcommand{\calS}{{\mathcal{S}}}
\newcommand{\calV}{{\mathcal{V}}}
\providecommand{\argmin}{\operatorname*{argmin}}
\definecolor{mygreen}{rgb}{0,0.4,0}
\newcommand{\Exp}[1]{\mathbb{E}\left[#1\right]}
\newcommand{\Prob}[1]{\mathbb{P}\left(#1\right)}
\newcommand{\norm}[1]{\left\lVert#1\right\rVert}
\newcommand{\inp}[2]{\left \langle #1, #2 \right \rangle}
\newcommand{\adj}{\ensuremath{\ast}}
\newcommand{\abs}[1]{\left\lvert#1\right\rvert}
\newcommand{\tol}{\ensuremath{\varepsilon}}
\newcommand{\Deltag}{\ensuremath{\nabla \Delta}} 
\newcommand{\work}{\mathrm{Work}}
\newcommand{\qoi}{\mathrm{QoI}}
\begin{document}

\mainmatter              
\title{A function approximation algorithm using multilevel active subspaces}
\titlerunning{Multilevel Active Subspaces}  
%
\author{Fabio Nobile\inst{1} \and Matteo Raviola\inst{1}
  \and Ra{\'u}l Tempone\inst{2}\inst{3}}
\authorrunning{Matteo Raviola et al.} 
%
\tocauthor{Fabio Nobile, Matteo Raviola, and Ra{\'u}l Tempone}
\institute{{\'E}cole polytechnique f{\'e}d{\'e}rale de Lausanne,\\
\email{matteo.raviola@epfl.ch}
\and
King Abdullah University of Science and Technology
\and
Rheinisch-Westf{\"{a}}lische Technische Hochschule Aachen}

\maketitle              

\begin{abstract}
  The Active Subspace (AS) method is a widely used technique for identifying the most influential directions in high-dimensional input spaces that affect the output of a computational model.
  The standard AS algorithm requires a sufficient number of gradient evaluations (samples) of the input output map to achieve quasi-optimal reconstruction of the active subspace, which can lead to a significant computational cost if the samples include numerical discretization errors which have to be kept sufficiently small.
  To address this issue, we propose a multilevel version of the AS method (MLAS) that utilizes samples computed with different accuracies and yields different active subspaces across accuracy levels, which can match the accuracy of single-level AS with reduced computational cost, making it suitable for downstream tasks such as function approximation.
  In particular, we propose to perform the latter via optimally-weighted least-squares polynomial approximation in the different active subspaces, and we present an adaptive algorithm to choose dynamically the dimensions of the active subspaces and polynomial spaces.
  We demonstrate the practical viability of the MLAS method with polynomial approximation through numerical experiments based on random partial differential equations (PDEs).
  \keywords{Uncertainty Quantification, Active Subspace method, Multilevel method, PDEs with random data, linear elliptic equations}
\end{abstract}

\section{Introduction}
The domain of scientific computing has witnessed great advances over the past decades.
The field of uncertainty quantification (UQ) and the study of parametric or stochastic models lead to high-dimensional problems which necessitate analysis and interpretation.
These models, though offering precious insights, come with an attached computational cost that drastically increases with their dimensionality. Reducing the dimensionality while retaining the core information is thus a challenge of paramount importance.
Multiple techniques to tackle this problem have flourished.
Notably, the Reduced Basis Method (RBM) is a classical approach for reducing the dimensionality of the output space in parametric differential models.
It includes, in particular, an offline stage that generates a low-dimensional linear subspace of solution space using high-fidelity snapshots \cite{hesthaven2016certified}.
Other techniques have emerged, on the other hand, to mitigate the dimensionality of the input space in parametric models, so that uncertainty quantification can be achieved with a reduced complexity.
These techniques span nonlinear and linear methods, such as basis adaptation for polynomial chaos expansion \cite{tsilifis2017reduced,tsilifis2018bayesian} and Active Subspaces (AS) method \cite{constantine2015active}.
The latter, in particular, emerged in the field of uncertainty quantification as a dimensionality reduction technique seeking to identify the most influential directions in the input space,
thus greatly helping a variety of downstream tasks on the input-output map such as approximation, integration, and optimization.
While the AS method excels in addressing the dimensionality issue, the associated computational costs for achieving higher accuracy levels remains a concern.

A technique to reduce the computational complexity of parametric/stochastic differential models is the multilevel paradigm which has grown popular in different computational domains.
The paradigm first appeared in \cite{giles2015multilevel} in the context of Monte Carlo approximation (MLMC).
This work provided a framework where different accuracy discretization levels of the underlying differential model could be combined to improve overall efficiency.
The fundamental idea is the use of a hierarchy of discretization levels, where each level provides a different approximation of the mathematical model with coarse discretizations being associated to low query costs, and vice-versa.
The primary objective of the multilevel paradigm is to achieve the same accuracy of the original methodology at a reduced computational cost by taking advantage of the hierarchical structure.
The multilevel paradigm has been successfully employed in other settings such as polynomial regression \cite{hajiali2020multilevel}, filtering \cite{hoel2016multilevel}, and stochastic collocation for random PDEs \cite{haji2016multi}.
Furthermore, it inspired several multifidelity methods, which exploit multiple models at different accuracy, in various fields, from optimization to uncertainty quantification. The authors of \cite{lam2020multifidelity} propose to construct an active subspace from a multifidelity Monte Carlo estimation of the gradient of the model output, however its complexity is limited by the Monte Carlo error.

In this work we present a multilevel version of the Active Subspace method (MLAS) inspired by \cite{hajiali2020multilevel} that builds a different active subspace for each accuracy level, hence retaining the desirable stability property of the AS method and, in turn, not suffering from the Monte Carlo error.
Furthermore, we discuss how to perform the function approximation in the estimated active subspaces by (adaptive) optimally-weighted least-squares polynomial fit.

\section{The Active Subspace method}
\label{nrt-nrt-sec:slas}
In this section, we briefly review the AS method.
We begin by introducing some basic objects and notation.
Consider a Lipschitz domain $\Gamma \subseteq \R^d$ with $d \in \N \cup \{ \infty \}$ and let $\mathcal{B}(\Gamma)$ be the Borel $\sigma$-algebra on $\Gamma$.
Let further $\mu$ be a probability measure on $(\Gamma,\mathcal{B}(\Gamma))$.
For $1 \leq p \leq +\infty$ and $\mathcal{V}$ a Banach space, let us consider the weighted $L^p_\mu(\Gamma;\mathcal{V})$ Lebesgue-Bochner spaces with standard definition, which we will mainly use with $\mathcal{V} = \R$ or $\mathcal{V} = \ell^2(d)$, where $\ell^2(d)$ is the space of sequences $(\mu_1,\dots,\mu_d) \in \R^d$ endowed with the $2$-norm $\norm{\cdot}_{\ell^2(d)}$.
Furthermore, we will omit $\mathcal{V}$ and write $L^p_\mu(\Gamma)$ when no confusion arises.
Finally, let us introduce the weighted Sobolev space $H^1_\mu(\Gamma)$ of squre-integrable functions with square-integrable weak derivative.

We now consider a function $f \in H^1_\mu(\Gamma)$. The goal is to approximate it by first reducing the dimensionality of the input space from $d$ (which is large and possibly infinite) to $r \ll d$.
Then, a second approximation step on the reduced $r$-dimensional space is considered.
The idea behind the AS method is to achieve these two goals by considering a model of the form
\begin{equation}
  \label{nrt-eq:approx-model}
  f \approx g \circ V^\adj,
\end{equation}
where $g: \R^r \to \R$ is a measurable function.
The dimensionality reduction is relegated to a linear operator $\bsy \to V^\adj \bsy$ with $V \in \R^{d \times r}$ orthogonal, $V^\adj$ being the adjoint of $V$.

Let $\mathrm{St}(d,r)$ denote the Stiefel manifold embedded in $\R^{d \times r}$, i.e. the set of orthogonal matrices $V \in \R^{d \times r}$ such that $V^\adj V = I_{r \times r}$. Throughout the paper, slightly abusing notation, we sometimes denote by $V$ the subspace spanned by the columns of the matrix $V \in \mathrm{St}(d,r)$.
Then, $\Pi_V$ denotes the orthogonal projector onto this subspace, that is $\Pi_V \bsy = V V^\adj \bsy$, and $\Pi_V^\perp$ the orthogonal projector onto its orthogonal complement.
In order to approximate $f$ with the model \eqref{nrt-eq:approx-model}, we have to choose both $V$ and $g$.
A natural benchmark is to consider the best approximation in the $L^2_\mu$ sense, namely the optimization problem
\begin{equation}
  \label{nrt-eq:general-AS-obj}
  \min_{
    \substack{\text{$g:\Gamma_V \to \R$ measurable}, \\ V \in \mathrm{St}(d,r)}
  }
  \int_\Gamma (f(\bsy) - g(V^\adj \bsy))^2 d\mu(\bsy).
\end{equation}
Consider $V \in \mathrm{St}(d,r)$ fixed and $W \in \mathrm{St}(d,d-r)$ such that $V V^\adj + W W^\adj = I$, $I$ being the identity map in $\R^d$, so that any $\bsy \in \Gamma$ can be written as
\begin{equation}
  \label{nrt-eq:active-and-inactive-vars}
  \bsy = V \bsx + W \bsz, \qquad \bsx := V^\adj \bsy \in \Gamma_V, \quad \bsz := W^\adj \bsy \in \Gamma_W,
\end{equation}
where $\Gamma_V := V^\adj \Gamma = \{V^\adj \bsy \; : \; \bsy \in \Gamma \}$, and similarly for $\Gamma_W$.
The AS method distinguishes between \textit{active} variables, $\bsx$, and \textit{inactive} ones, $\bsz$.
The active variables are the key parameters or dimensions that exert a significant influence on the output of the mathematical model represented by the function $f$.
Inactive variables, on the other hand, ideally have minimal impact on the system and can be discarded without significantly affecting the accuracy of the model.

Letting $Y$ be a random vector with distribution $\mu$, we can define the marginal measures, denoted as $d\mu_V$ and $d\mu_W$, of the random vectors $X := V^\adj Y$ and $Z := W^\adj Y$, respectively. Define further the conditional measure $d\mu_{Z \vert X}(\cdot \vert \bsx)$ for $Z$ given $X = \bsx$.
It is a well-known result \cite[Chapter 9]{williams_1991} that the optimal solution of Problem \eqref{nrt-eq:general-AS-obj} when performing the minimization over all measurable functions $g:\Gamma_V \to \R$ is given by the conditional expectation
\begin{equation}
  \label{nrt-eq:best-g}
  g^\star(\bsx) = \int_{\Gamma_{\bsx}} f(V \bsx + W \bsz) d\mu_{Z \vert X}(\bsz \vert \bsx),
\end{equation}
where $ \Gamma_{\bsx} := \{ \bsz \in \R^{d-r} \; \vert \; V \bsx + W \bsz \in \Gamma \} \subseteq \Gamma_W $ denotes the support of the measure $\mu_{Z \vert X}(\cdot \vert \bsx)$.
In other words, given $\bsx \in \R^r$, $g^\star(\bsx)$ is the average of $f$ with respect to $\mu$ over all points $\bsy \in \R^d$ which map onto $\bsx$ by $V^\adj$.
Note that, since $f \in L^2_\mu(\Gamma)$, $g^\star$ is unique as an element of $L^2_{\mu_V}(\Gamma_V)$, and we may write
\begin{equation}
  g^\star \circ V^\ast = \Lambda_{L^2_\mu(\Gamma) / V} f,
\end{equation}
where we defined for a given closed subspace $\calS \subseteq L^2_\mu(\Gamma)$ the orthogonal projector $\Lambda_{\calS}: L^2_\mu(\Gamma;\calV) \to \calS \otimes \calV$ by
\begin{equation}
  \label{nrt-eq:L2-proj}
  \Lambda_{\calS} f := \argmin_{v \in \calS \otimes \calV} \norm{f - v}_{L^2_\mu(\Gamma)},
\end{equation}
and $L^2_\mu(\Gamma) / V$ denotes the space of square-integrable functions which are constant with respect to directions in $V$.
We can hence reduce \eqref{nrt-eq:general-AS-obj} to the problem
\begin{equation}
  \label{nrt-eq:approx-err}
  \min_{
    V \in \mathrm{St}(d,r)
  }
  \norm{f - g^\star \circ V^\adj}_{L^2_\mu(\Gamma)}^2 = \int_{\Gamma} (f(\bsy) - g^\star(V^\adj \bsy))^2 d\mu(\bsy).
\end{equation}
Interestingly, the error in this construction can be bounded assuming a Poincaré-type inequality.
\begin{assumption}{(Sliced Poincar\'e inequality)}
  \label{asmp:poincare}
  There exists a constant $C_P > 0$ such that, for any $\bsx \in \Gamma_V$ and $h \in H^1_{\mu_{Z \vert X}(\cdot \vert \bsx)}(\Gamma_{\bsx})$ with zero mean, it holds
  \begin{equation}
    \left( \int_{\Gamma_{\bsx}} h(\bsz)^2 d\mu_{Z \vert X}(\bsz \vert \bsx) \right)^{\frac{1}{2}}
    \leq C_P \left( \int_{\Gamma_{\bsx}} \norm{\nabla h(\bsz)}^2_{\ell^2(d-r)} d\mu_{Z \vert X}(\bsz \vert \bsx) \right)^{\frac{1}{2}}.
  \end{equation}
\end{assumption}
\begin{remark}
  The Poincaré inequality is inherently tied to the domain $\Gamma$, the measure $\mu$, and the subspace $V$.
  The setting we are most interested in is one where the $y_i$'s denote independent and identically distributed random variables, hence we focus on the case where the domain $\Gamma$ is expressed as a Cartesian product space and the measure $\mu$ is of tensor product type, namely $\Gamma = \times_{i=1}^d \Gamma_i$ and $\mu = \otimes_{i=1}^d \mu_i$, where each $\Gamma_i \subseteq \mathbb{R}$ and $\mu_i$ is a univariate measure on $\Gamma_i$.
  The most common settings are the ones where $\mu_i$ is the standard Gaussian measure on $\Gamma_i = \mathbb{R}$, or the uniform measure with $\Gamma_i = [-1,1]$.
  One can show \cite{bebe2003,chen1982} that in the former case $C_P = 1$, while in the latter $C_P$ scales with $\sqrt{d}$.
  These results suggest that the AS method is better suited for the Gaussian case.
\end{remark}
\begin{theorem}{\cite[Theorem 4.3]{constantine2015active}}
  \label{thm:err-bound}
  Let \Cref{asmp:poincare} hold, then, for a fixed $V \in \mathrm{St}(d,r)$, the $L^2_\mu$ error of the approximation $g^\star \circ V^\adj$, where $g^\star$ is defined in \eqref{nrt-eq:best-g}, satisfies
  \begin{equation}
    \label{nrt-eq:err-bound}
    \norm{f - g^\star \circ V^\adj}_{L^2_\mu(\Gamma)} \leq
    C_P \norm{(I-\Pi_{V}) \nabla f}_{L^2_\mu(\Gamma)}.
  \end{equation}
\end{theorem}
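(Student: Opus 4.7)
The plan is to disintegrate the outer $L^2_\mu$ norm along the splitting $\bsy = V\bsx + W\bsz$, apply the sliced Poincaré inequality fibrewise, and then reassemble and identify the right-hand side as the squared norm of the orthogonal projection $(I-\Pi_V)\nabla f$.

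First I would fix $V \in \mathrm{St}(d,r)$ and a complementary $W \in \mathrm{St}(d,d-r)$ with $VV^\adj + WW^\adj = I$, and use the disintegration of $\mu$ as $d\mu(\bsy) = d\mu_{Z\vert X}(\bsz \vert \bsx)\,d\mu_V(\bsx)$ to rewrite
\begin{equation*}
  \norm{f - g^\star \circ V^\adj}_{L^2_\mu(\Gamma)}^2 = \int_{\Gamma_V} \int_{\Gamma_\bsx} \bigl( f(V\bsx + W\bsz) - g^\star(\bsx) \bigr)^2 \, d\mu_{Z\vert X}(\bsz \vert \bsx)\, d\mu_V(\bsx).
\end{equation*}
For each fixed $\bsx \in \Gamma_V$, define $h_\bsx(\bsz) := f(V\bsx + W\bsz) - g^\star(\bsx)$. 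By the very definition of $g^\star$ in \eqref{nrt-eq:best-g} as the conditional expectation, $h_\bsx$ has zero mean with respect to $\mu_{Z\vert X}(\cdot\vert\bsx)$, which is exactly the hypothesis needed to invoke \Cref{asmp:poincare}.

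Next I would apply the sliced Poincaré inequality to each $h_\bsx$:
\begin{equation*}
  \int_{\Gamma_\bsx} h_\bsx(\bsz)^2 \, d\mu_{Z\vert X}(\bsz\vert \bsx) \leq C_P^2 \int_{\Gamma_\bsx} \norm{\nabla_\bsz h_\bsx(\bsz)}_{\ell^2(d-r)}^2 \, d\mu_{Z\vert X}(\bsz\vert \bsx).
\end{equation*}
The chain rule gives $\nabla_\bsz h_\bsx(\bsz) = W^\adj \nabla f(V\bsx + W\bsz)$, and since $W$ has orthonormal columns and $WW^\adj = I - VV^\adj = I - \Pi_V$, we have the identity
\begin{equation*}
  \norm{W^\adj \nabla f(\bsy)}_{\ell^2(d-r)}^2 = \nabla f(\bsy)^\adj (I-\Pi_V) \nabla f(\bsy) = \norm{(I-\Pi_V)\nabla f(\bsy)}_{\ell^2(d)}^2.
\end{equation*}

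Finally I would substitute this identity into the fibrewise bound, integrate against $d\mu_V(\bsx)$, and reassemble the disintegration on the right-hand side to conclude
\begin{equation*}
  \norm{f - g^\star \circ V^\adj}_{L^2_\mu(\Gamma)}^2 \leq C_P^2 \int_\Gamma \norm{(I-\Pi_V)\nabla f(\bsy)}_{\ell^2(d)}^2 \, d\mu(\bsy),
\end{equation*}
and take square roots to obtain \eqref{nrt-eq:err-bound}. The main conceptual obstacle is verifying the two pieces that make the inequality tight in the right norm: that $h_\bsx$ has zero conditional mean (which follows directly from the characterisation of $g^\star$ as a conditional expectation, provided $f \in L^2_\mu$ and the disintegration is well-defined) and that the fibrewise gradient $W^\adj \nabla f$ contributes exactly the projection onto $V^\perp$. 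The rest is a Fubini-type reassembly, routine once the disintegration and Sobolev regularity $f \in H^1_\mu(\Gamma)$ are justified.
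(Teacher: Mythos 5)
Your argument is correct and is essentially the standard proof of this bound as given in the cited reference \cite[Theorem 4.3]{constantine2015active} (the paper itself only cites the result rather than reproving it): disintegrate $\mu$ over the active variables, use that $g^\star(\bsx)$ is the conditional mean so each fibre function has zero mean, apply the sliced Poincar\'e inequality, and identify $\norm{W^\adj \nabla f}_{\ell^2(d-r)} = \norm{(I-\Pi_V)\nabla f}_{\ell^2(d)}$ via $WW^\adj = I - \Pi_V$. All the key verifications (zero conditional mean, fibrewise $H^1$ regularity for $\mu_V$-a.e.\ $\bsx$, and the projector identity) are correctly handled, so nothing is missing.
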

This result states that an upper bound for the best reconstruction error of $f$ from the $r$ dimensional subspace $V$ is given by the projection error of the gradient of $f$ onto the subspace $V$.
Instead of minimizing the reconstruction error \eqref{nrt-eq:approx-err} over $\mathrm{St}(d,r)$, owing to Theorem \ref{thm:err-bound} one may minimize the upper bound in \eqref{nrt-eq:err-bound}.
This problem, indeed, turns out to have a simple solution given by the subspace spanned by the first $r$ dominant eigenvectors of
\begin{equation}
  \label{nrt-eq:cov}
  C := \int_\Gamma \nabla f(\bsy) \otimes \nabla f(\bsy) d \mu(\bsy),
\end{equation}
where $\otimes$ denotes the outer product in $\R^d$.
This quantity represents the second moment of $\nabla f$, yet, in accordance with the prevailing terminology in the AS literature, we hereafter refer to it as the covariance.
In practice, in order to compute the active subspace, the integral in \eqref{nrt-eq:cov} must be discretized, which can be done, e.g., using a Monte Carlo estimator based on a random sample $\{\bsy_i\}_{i =1}^M \stackrel{\text{iid}}{\sim} \mu$ of size $M$.
This naturally leads to the straightforward \Cref{algo:SLAS} as the core of the AS methodology. There, $\mathrm{SVD}(C,r)$ denotes the truncated SVD of the matrix $C$ at rank $r$.

Note that, given a rank $r$, to run \Cref{algo:SLAS} one should choose a sample size such that a good reconstruction $\hat{U}_r$ of the optimal subspace $U_r$ is achieved, that is $M$ should be chosen large enough so that $\norm{(\Pi_{U_r}-\Pi_{\hat{U}_r}) \nabla f}_{L^2_\mu(\Gamma)}$ is small enough.
Following \cite{constantine2015active}, we make the following assumption.
\begin{assumption}\label{asmp:rlogr}
  There exists $\eta > 0$ such that, provided $M \geq \calO(\kappa r^\eta \log(r))$ for some $\kappa > 1$,
  \begin{equation}\label{nrt-eq:}
    \norm{(I-\Pi_{\hat{U}_r}) \nabla f}_{L^2_\mu(\Gamma)}
    \lesssim \norm{(I-\Pi_{U_r}) \nabla f}_{L^\infty_\mu(\Gamma)}
  \end{equation}
  holds with probability $1 - \calO(M^{-\kappa})$.
\end{assumption}

\begin{algorithm}[h]
  \caption{Single-level Active Subspace method with ideal reconstruction (\textbf{SLAS}) \cite{constantine2015active}}
  \label{algo:SLAS}
  \Input{Function $f$, subspace dimension $r$, sample size $M$.}

  Draw $\{\bsy_i\}_{i =1}^M \stackrel{\mathrm{iid}}{\sim} \mu$ and compute the sample covariance
  \begin{equation}
    \hat{C} := \frac{1}{M} \sum_{i=1}^M \nabla f(\bsy_i) \otimes \nabla f(\bsy_i).
  \end{equation} \\

  Compute a truncated spectral decomposition $\hat{U}_r, \hat{\Sigma}_r^2 \gets \mathrm{SVD}(\hat{C},r)$. \\

  Compute ideal reconstruction function $g^\star \circ \hat{U}_r^\adj
    \gets  \Lambda_{L^2_\mu(\Gamma) / \hat{U}_r} f$.

  \Output{Approximation $g^\star \circ \hat{U}_r^\adj$.}
\end{algorithm}

\section{Multilevel Active Subspaces}
\label{nrt-nrt-sec:mlas}

In scientific computing applications we often face the impossibility of evaluating the function $f$ and its gradient exactly.
This can be, for instance, due to the fact that the relevant computational model involves the discretization of an ODE or a PDE.
Hence, in practice we have access only to approximations $f_l \approx f$ and, consequently, $\nabla f_l \approx \nabla f$, $l \in \N$ being the level of approximation with $f_\infty := f$.
For instance, in a PDE setting, the level $l$ could be related to a mesh discretization with mesh size $h_l$.
This entails that the larger $l$ is, the higher is the computational cost needed to evaluate $f_l$ and $\nabla f_l$.

A single level AS approximation consists in choosing a level $L \in \N$ and running \Cref{algo:SLAS} with input parameters~$(f_L, r_L, M_L)$.
Denoting the orthogonal matrix computed in the SLAS algorithm by $\hat{U}_L$, let
\begin{equation}
  \mathcal{S}^{\mathrm{SL},\star}_L(f_\infty)
  := g_L^\star \circ \hat{U}_L^\adj
  = \Lambda_{L^2_\mu(\Gamma)/\hat{U}_{L}} f_L,
\end{equation}
be the idealized single-level estimator, which satisfies the single-level error decomposition
\begin{equation}
  \label{nrt-eq:SL-err-dec}
  \norm{f_\infty - \mathcal{S}^{\mathrm{SL},\star}_L(f_\infty)}_{L^2_\mu(\Gamma)}
  \leq \norm{f_\infty - f_L}_{L^2_\mu(\Gamma)}
  + C_P \norm{(I - \hat{\Pi}_L) \nabla f_L}_{L^2_\mu(\Gamma)},
\end{equation}
where we defined the projector $\hat{\Pi}_L := \hat{U}_L(\hat{U}_L)^\adj$.
Note that the above only involves the function at level $L$.
If a sufficiently small error tolerance is requested, $L$ should be chosen large enough to control the first error contribution in \eqref{nrt-eq:SL-err-dec} and many evaluations of $\nabla f_L$ should be utilized for the approximation of the covariance, making the AS method prohibitive in terms of computational cost.
The key observation is that the single-level AS method does not exploit the flexibility we have of querying $f$ at different levels.
Hence, in this section, we aim at designing an improved method which can exploit the whole hierarchy $f_0,f_1,\dots$ of discretizations available.

Our idea is to introduce the multilevel paradigm in the AS method.
The multilevel paradigm is a powerful computational strategy revolving around the use of hierarchies of discretization levels to reduce complexity of computational routines.
A prime example of this approach is the multilevel Monte Carlo (MLMC) method \cite{giles2015multilevel} where the objective is to compute an expectation of the form $\Exp{f_L}$ with reduced complexity.
MLMC is based on the observation that $\Exp{\cdot}$ is a linear operator so that one can decompose the original expectation with the telescoping sum
\begin{equation}
  \Exp{f_L} = \Exp{f_0} + \sum_{l=0}^L \Exp{f_l - f_{l-1}}.
\end{equation}
Then, the idea is to approximate expectations $\Exp{f_l - f_{l-1}}$ separately with a sample size which is large at coarse levels of discretization and small at finer levels, where the differences $\Delta_l := f_l - f_{l-1}$ are small but expensive to evaluate.
In our setting, however, the operator is given by the truncated SVD of the gradient which is nonlinear, hence the telescoping argument cannot be straightforwardly applied.
We take inspiration from \cite{hajiali2020multilevel} which deals with a similar issue.
Given a sequence of ranks $r_0 < \dots < r_L$, our idea is to first build an approximation $f_0 \approx g_0^\star \circ \hat{U}_{L,0}^\adj$ using $M_{L}$ gradient evaluations, that is running \Cref{algo:SLAS} with input parameters $(f_0,r_L,M_L)$.
Then, we correct it with an approximation of $\Delta_1 \approx g_1^\star \circ \hat{U}_{L-1,1}^\adj$ using $M_{{L-1}}$ evaluations running \Cref{algo:SLAS} with input parameters~$(\Delta_1,r_{L-1},M_{L-1})$ and iterate in this manner computing approximations $\Delta_l \approx g_l^\star \circ \hat{U}_{L-l,l}^\adj$ until the finest level $L$ is reached.
We then define the Multilevel Active Subspaces (MLAS) method
\begin{equation}
  \mathcal{S}_L^{\mathrm{ML},\star}(f_\infty)
  := \sum_{l =0}^L g_l^\star \circ \hat{U}_{L-l,l}^\adj,
\end{equation}
where we used the auxiliary definition $f_{-1} := 0$.
Since the sample sizes $M_l$ are linked to the subspace dimensions $r_l$ by \Cref{asmp:rlogr}, it holds that $M_{0} < \dots < M_L$, hence this procedure uses only a few gradient evaluations for differences at finer levels and progressively enlarges the subspace dimension while taking more gradient evaluations at coarser and cheaper levels.
The strategy is summarized in \Cref{algo:ideal-MLAS}.

\begin{algorithm}[h]
  \caption{Multilevel Active Subspaces method (\textbf{MLAS}) with ideal reconstruction}
  \label{algo:ideal-MLAS}
  \Input{Functions $(f_0,\dots,f_L)$, subspaces dimensions $(r_0,...,r_L)$, numbers of samples $(M_0,...,M_L)$.}

  Initialize approximation $\mathcal{S}_L^{\mathrm{ML},\star}(f_\infty) \gets 0$. \\
  \For{$l=0,\dots,L$}{
  Draw $M_{L-l}$ samples $\{\bsy_i\}_{i =1}^{M_{L-l}} \stackrel{\mathrm{iid}}{\sim} \mu$ and compute sample covariance
  \begin{equation}
    \hat{C}_{L-l,l} \gets \frac{1}{M_{L-l}} \sum_{i=1}^{M_{L-l}} \Deltag_l (\bsy_i) \otimes \Deltag_l (\bsy_i).
  \end{equation} \\

  Compute truncated spectral decomposition
  $
    \hat{U}_{L-l,l}, \hat{\Sigma}_{L-l,l}^2 \gets \mathrm{SVD}(\hat{C}_{L-l,l},r_{L-l})
  $
  .\\

  Compute ideal reconstruction function $g_l^\star \circ \hat{U}_{L-l,l}^\adj
    \gets  \Lambda_{L^2_\mu(\Gamma) / \hat{U}_{L-l,l}} f$. \\

  Update approximation $\mathcal{S}_L^{\mathrm{ML},\star}(f_\infty) \gets \mathcal{S}_L^{\mathrm{ML},\star}(f_\infty) + g_l^\star \circ \hat{U}_{L-l,l}$. \\
  }
  \Output{Approximation $\mathcal{S}_L^{\mathrm{ML},\star}(f_\infty)$.}
\end{algorithm}

\subsection{Complexity analysis}
Defining projectors $\hat{\Pi}_{L-l,l} := \hat{U}_{L-l,l} (\hat{U}_{L-l,l})^\adj$ we have the multilevel error decomposition
\begin{equation}
  \label{nrt-eq:ml-err-decomp}
  \norm{f_\infty - \mathcal{S}_L^{\mathrm{ML},\star}(f_\infty)}_{L^2_\mu(\Gamma)}
  \leq \norm{f_\infty - f_L}_{L^2_\mu(\Gamma)}
  + C_P \sum_{l = 0}^L \norm{(I-\hat{\Pi}_{L-l,l}) \Deltag_l}_{L^2_\mu(\Gamma)},
\end{equation}
which reveals a spatial approximation component at level $L$ -- as in the single-level method -- plus a subspace approximation component for the differences at level $l=0,\ldots,L$.
The latter are referred to as \textit{mixed differences} in the literature since they couple spatial and subspace approximation.

In order to analyze the complexity (cost versus accuracy) of the MLAS method, we make some assumptions on the decay of the mixed differences. We introduce a function class $\calF$ of continuous functions on $\Gamma$ endowed with seminorm $\abs{\cdot}_{\calF}$ and define the subspace approximation error at rank $r$ in the function class $\calF$, $r \leq d$, as follows
\begin{equation}
  e_{r} (\calF) := \sup_{f \in \calF} \frac{\norm{(I - \Pi_{U(f)}) \nabla f}_{L^\infty_\mu(\Gamma)}}{\abs{f}_{F_w}},
\end{equation}
where we recall that $U(f) \in \mathrm{St}(d,r)$ denotes the optimal subspace for $\nabla f$ in the $L^2_\mu(\Gamma)$ sense.
This enables us to make an assumption on the rate at which this error decays.
\begin{assumption}{(Strong subspace approximability and convergence of approximations)}
  \label{asmp:ssa}
  It holds
  \begin{equation}
    e_{r}({\calF}) \lesssim r^{-{\alpha}}
  \end{equation}
  for some $\alpha > 0$. Furthermore, given $f_\infty \in {\calF}$ there exist approximations $f_l \in {\calF}$ such that
  \begin{equation}
    \abs{\Delta_l}_{L^2_\mu(\Gamma)} \lesssim h_l^{\beta_w}
    \quad \mathrm{and} \quad
    \abs{\Delta_l}_{\calF} \lesssim h_l^{\beta_s}
  \end{equation}
  for some $0 < \beta_s \leq \beta_w$.
\end{assumption}

As anticipated, since the approximation level $l$ is connected to the granularity $h_l$ of the discretization of the computational model, we consider the setting where the smaller $h_l$ is (i.e., the larger $l$ is) the costlier it is to evaluate $\nabla f_l$.
We call ``$\mathrm{Work}$'' the computational cost of all gradient evaluations (we assume that the cost of the $\mathrm{SVD}$'s is negligible with respect to the cost of collecting all gradient evaluations).
The following assumption makes this intuition precise.
\begin{assumption}{(Sample work)}
  \label{asmp:sw}
  One evaluation of $\nabla f_{l}$ requires a computational effort which is $\work(\nabla f_{l}) \lesssim h_l^{-\gamma}$, for some $\gamma > 0$.
\end{assumption}

Given these assumptions, we can state the following complexity result.
\begin{theorem}{(Idealized multilevel -- complexity)} \label{thm:mlasir-complexity}
  Denote by
  \begin{equation}
    \work(\mathcal{S}_L^{\mathrm{ML},\star}(f_\infty))
    := \sum_{l=0}^L M_{L-l} (\work(\nabla f_l) + \work(\nabla f_{l-1}))
  \end{equation}
  the work that $\mathcal{S}_L^{\mathrm{ML},\star}(f_\infty)$ requires for evaluations of the functions $\nabla f_l$, $l = 0,\ldots,L$. Define
  \begin{equation}
    \rho_{\mathrm{ML}} =
    \begin{cases}
      \frac{\eta}{\alpha},                                           & \text{if $\frac{\gamma}{\beta_s} \leq \frac{\eta}{\alpha}$} \\
      \theta \frac{\gamma}{\beta_s} + (1-\theta) \frac{\eta}{\alpha} & \text{otherwise}
    \end{cases},
  \end{equation}
  where $\theta := \frac{\beta_s}{\beta_w}$ and
  \begin{equation}
    t :=
    \begin{cases}
      2                    & \text{if $\frac{\gamma}{\beta_s} < \frac{\eta}{\alpha}$}                         \\
      3 + \frac{1}{\alpha} & \text{if $\frac{\gamma}{\beta_s} = \frac{\eta}{\alpha}$}                         \\
      1                    & \text{if $\frac{\gamma}{\beta_s} > \frac{\eta}{\alpha}$ and $\beta_w = \beta_s$} \\
      2                    & \text{if $\frac{\gamma}{\beta_s} > \frac{\eta}{\alpha}$ and $\beta_w > \beta_s$} \\
    \end{cases}.
  \end{equation}
  Let $0<\tol\lesssim 1$. Then, there exists $L = L(\tol) \in \N$, $r_0,\dots,r_L$, and $M_0,\dots, M_L$ such that
  \begin{equation}
    \work (\mathcal{S}^{\mathrm{ML}, \star}_L( f_\infty))
    \lesssim \abs{\log(\tol)}^{-t}\log(\abs{\log(\tol)}) \tol^{-\rho_{\mathrm{ML}}}
  \end{equation}
  and such that in an event $E$ with $\Prob{E^c} \lesssim \tol^{\log \abs{\log(\tol)}}$ the multilevel approximation satisfies
  \begin{equation}
    \norm{f_\infty - \mathcal{S}^{\mathrm{ML},\star}_L(f_\infty)}_{L^2_\mu(\Gamma)} \lesssim \tol.
  \end{equation}
\end{theorem}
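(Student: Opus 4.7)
The argument starts from the multilevel error decomposition \eqref{nrt-eq:ml-err-decomp}, which splits the reconstruction error into a spatial bias $\norm{f_\infty-f_L}_{L^2_\mu(\Gamma)}$ and a sum of $L+1$ mixed-difference subspace errors. By \Cref{asmp:ssa} the bias obeys $\norm{f_\infty-f_L}_{L^2_\mu(\Gamma)}\lesssim h_L^{\beta_w}$, so choosing $L$ with $h_L^{\beta_w}\simeq\tol$ absorbs this term and fixes $L\simeq \beta_w^{-1}\log(\tol^{-1})$. For each remaining summand, applying \Cref{asmp:rlogr} to $\Delta_l$ in place of $f$, combined with the definition of $e_{r}(\calF)$ and \Cref{asmp:ssa}, yields on an event of probability $1-\calO(M_{L-l}^{-\kappa})$ and under the sample-size condition $M_{L-l}\gtrsim \kappa\, r_{L-l}^\eta\log r_{L-l}$ the bound
\begin{equation*}
\norm{(I-\hat{\Pi}_{L-l,l})\Deltag_l}_{L^2_\mu(\Gamma)}\lesssim \abs{\Delta_l}_{\calF}\,r_{L-l}^{-\alpha}\lesssim h_l^{\beta_s}\,r_{L-l}^{-\alpha}.
\end{equation*}

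The heart of the proof is then a constrained optimization: distribute a tolerance budget $\simeq\tol$ across the $L+1$ terms to select the ranks $r_{L-l}$, and then fix $M_{L-l}$ by the sample-size rule above. A Lagrange balancing of the cost functional $\sum_l M_{L-l}(h_l^{-\gamma}+h_{l-1}^{-\gamma})$ against the error constraint $\sum_l h_l^{\beta_s} r_{L-l}^{-\alpha}\lesssim \tol$ produces, up to logarithms, $r_{L-l}\propto h_l^{(\beta_s+\gamma)/(\eta+\alpha)}$, after which each cost contribution scales like $h_l^{(\eta\beta_s-\alpha\gamma)/(\eta+\alpha)}$ in $h_l$ and like $\tol^{-\eta/\alpha}$ in the tolerance budget that fixes the Lagrange multiplier.

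The sign of $\eta\beta_s-\alpha\gamma$, equivalently the comparison between $\gamma/\beta_s$ and $\eta/\alpha$, then dictates whether the geometric sum over $l$ is dominated by the coarsest level (giving $\rho_{\mathrm{ML}}=\eta/\alpha$), by the finest level (where substituting $h_L^{\beta_w}\simeq\tol$ converts the scaling into the convex-combination rate $\theta\gamma/\beta_s+(1-\theta)\eta/\alpha$ with $\theta=\beta_s/\beta_w$), or is flat at the critical balance. The polylogarithmic prefactor $\abs{\log\tol}^{-t}\log\abs{\log\tol}$ then collects three sources: the $\log r_{L-l}$ from the sample-size rule, the $L+1\simeq\log(\tol^{-1})$ levels in the sum, and -- only in the degenerate case $\gamma/\beta_s=\eta/\alpha$ -- an extra $\abs{\log\tol}^{1+1/\alpha}$ from the flat sum. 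For the probabilistic statement, inflating $\kappa$ to grow like $\log\abs{\log\tol}$ makes each per-level failure probability at most $M_{L-l}^{-\kappa}$, and a union bound over the $L+1$ levels yields $\Prob{E^c}\lesssim \tol^{\log\abs{\log\tol}}$.

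The principal technical obstacle is the regime-by-regime bookkeeping: each of the four subcases for $t$ requires resolving the Lagrange multiplier against the binding endpoint of the geometric sum (coarse, fine, critical, or the subregime where $\beta_s=\beta_w$ versus $\beta_s<\beta_w$) and tracking carefully how the $\log r_{L-l}$ overhead, the number of levels, and the tolerance interact to produce the stated exponents $\rho_{\mathrm{ML}}$ and $t$ sharply. The skeleton follows the complexity argument of \cite{hajiali2020multilevel} for multilevel polynomial regression, transposed to the active-subspace setting via the seminorm $\abs{\cdot}_\calF$, which plays the role of the degree-dependent norm there and provides the shrinking factor $h_l^{\beta_s}$ that makes higher ranks unnecessary on finer mixed differences.
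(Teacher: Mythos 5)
Your proposal follows the same route as the paper, which gives no details of its own and simply defers to \cite[Theorem 4.3]{hajiali2020multilevel} with the single substitution of $\calO(r^\eta\log r)$ gradient evaluations for the $\calO(r\log r)$ function evaluations used there; your expansion of that argument --- the decomposition \eqref{nrt-eq:ml-err-decomp}, the per-level bound $\norm{(I-\hat{\Pi}_{L-l,l})\Deltag_l}_{L^2_\mu(\Gamma)}\lesssim h_l^{\beta_s}r_{L-l}^{-\alpha}$ obtained from \Cref{asmp:rlogr} and \Cref{asmp:ssa}, and the Lagrangian balancing giving $r_{L-l}\propto h_l^{(\beta_s+\gamma)/(\eta+\alpha)}$ with the coarse-/fine-dominated regimes --- is exactly the intended argument, and your exponents for $\rho_{\mathrm{ML}}$ check out in both cases. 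The one place your sketch is loose is the logarithmic prefactor: the three sources you enumerate would each contribute \emph{positive} powers of $\abs{\log\tol}$, whereas the statement asserts a \emph{negative} power $\abs{\log\tol}^{-t}$, so the deferred ``bookkeeping'' is precisely where the sign and value of $t$ must be extracted from the optimization in the reference rather than from a union of overhead factors.
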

\begin{proof}
  This proof closely follows that of \cite[Theorem 4.3]{hajiali2020multilevel}.
  The only adjustment needed is to address the fact that the (polynomial) projection in \cite{hajiali2020multilevel} requires $\calO(r \log(r))$ (function) evaluations, whereas our (active subspace) projection requires $\calO(r^\eta \log(r))$ (gradient) evaluations.
\end{proof}

\section{Polynomial approximation}
\subsection{Optimally weighted least squares}
In this section we aim at developing a fully discrete version of the MLAS method for function approximation, giving a practical recipe for the approximation of the idealized reconstruction function \eqref{nrt-eq:best-g}.
In particular, we focus on discrete least-squares fitting based on random evaluations with optimal sampling as proposed in \cite{cohen2017optimal}.
Recall the definition of $L^2_\mu(\Gamma)$-orthogonal projector in \eqref{nrt-eq:L2-proj} and let us fix some space $\calS_m \subseteq L^2_\mu(\Gamma;\R)$ of dimension $m$ of functions defined everywhere on $\Gamma$.
Furthermore, assume that for any $\bsy\in \Gamma$, there exists $v \in \calS_m$ such that $v(\bsy) \neq 0$.
Let $\nu$ be a probability measure on $\Gamma$ such that $\mu \ll \nu$, and consider an i.i.d. random sample $\{ \bsy_j \}_{j=1}^N$ from $\nu$. Then, the least-squares estimator is given by
\begin{equation*}
  \hat{\Lambda}_{\calS_m} g = \mathrm{argmin}_{v \in \calS_m \otimes \calV} \frac{1}{N} \sum_{i=1}^N w(\bsy_i) \norm{v(\bsy_i)-g(\bsy_i)}_\calV^2,
\end{equation*}
where $w := \frac{d\mu}{d\nu}$.
The solution to this least squares problem is provided by the well known normal equations, which are described next.
We first introduce the empirical semi-norm for $\calV$-valued functions $v: \Gamma \to \calV$
\begin{equation}
  \norm{v}_{N,\calV} = \frac{1}{N} \sum_{i=1}^N w(\bsy_i) \norm{v(\bsy_i)}_{\calV}^2,
\end{equation}
which is a Monte Carlo approximation of the $L^2_\mu(\Gamma;\calV)$-norm.
Assume now $\{ \phi_1,\dots,\phi_m \}$ is an $L^2_\mu(\Gamma)$-orthonormal basis of $\calS_m$ and define the matrices $G \in \R^{m \times m}$ and $J \in \calV^m$
\begin{equation}
  G_{ij} = \frac{1}{N} \sum_{k=1}^{N} w(\bsy_k) \phi_i(\bsy_k)\phi_j(\bsy_k) = \inp{\phi_i}{\phi_j}_{N},
  \quad J_i := \inp{g}{\phi_i}_{N}.
\end{equation}
Then, the normal equations read
\begin{equation}
  G c = J,
\end{equation}
where $c \in \calV^m$ is such that $\hat{\Lambda}_{\calS_m} g = \sum_{i=1}^m c_i \phi_i$.
It turns out that, whenever $G$ is well conditioned, the reconstruction error is quasi-optimal, namely it is bounded by the best approximation error
\begin{equation} \label{nrt-eq:poly-best-err}
  e_m(g) = \min_{v \in \calS_m \otimes \calV} \norm{g-v}
\end{equation}
up to a constant term.

Then, the idea is to choose the sample size $N$ big enough so that $\norm{G-I} < \delta$ with high probability.
Let us introduce the function
\begin{equation}
  \bsy \mapsto k_{m,w}(\bsy) = \sum_{j=1}^m w(\bsy) \phi_j(\bsy)^2,
\end{equation}
whose reciprocal is known as the Christoffel function \cite{nevai1986geza}, and define
\begin{equation}
  K_{m,w}:= \norm{k_{m,w}}_{L^\infty(\Gamma)},
\end{equation}
which trivially satisfies $K_{m,w} \geq m$ since $\int_\Gamma k_{m,w} d \mu = m$.
We can now state the following theorem.
\begin{theorem}{(From \cite{cohen2017optimal})}
  For any $t>0$, if $m$ and $N$ are such that
  \begin{equation}
    K_{m,w} \leq \kappa \frac{N}{\log(N)}, \quad \mathrm{with} \quad \kappa =\frac{1- \log(2)}{2+2t},
  \end{equation}
  then the weighted least square estimator satisfies
  \begin{equation}
    \Exp{\norm{g-\hat{\Lambda}_{\calS_m} g}^2_{L^2_\mu(\Gamma;\calV)}} \leq (1 + \epsilon(s)) e_m(g)^2,
  \end{equation}
  where $e_m(g)$ is defined in \eqref{nrt-eq:poly-best-err}, with probability at least $1-N^{-t}$.
  \label{thm:theorem-approx-lstsq}
\end{theorem}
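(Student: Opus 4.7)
The plan is to follow the original argument of Cohen–Migliorati, whose structure splits the proof into a probabilistic concentration statement on the Gramian and a deterministic quasi-optimality argument on the event where that Gramian is well conditioned.

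First I would show that $G$ is an unbiased estimator of the identity. By $L^2_\mu(\Gamma)$-orthonormality of $\{\phi_1,\dots,\phi_m\}$,
\begin{equation*}
\EE\bigl[w(\bsy)\phi_i(\bsy)\phi_j(\bsy)\bigr]
= \int_\Gamma \phi_i(\bsy)\phi_j(\bsy)\,d\mu(\bsy) = \delta_{ij},
\end{equation*}
so $\EE[G] = I_m$. Writing $G = \tfrac{1}{N}\sum_{k=1}^N X_k$ with $X_k := w(\bsy_k)\bsphi(\bsy_k)\bsphi(\bsy_k)^\adj$, each $X_k$ is a.s.\ a positive semidefinite rank-one matrix whose operator norm equals $w(\bsy_k)\sum_{j=1}^m \phi_j(\bsy_k)^2 = k_{m,w}(\bsy_k) \leq K_{m,w}$.

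Second, I would apply the matrix Chernoff inequality (Tropp's version) to $G$. The standard bound gives, for any $\delta\in(0,1)$,
\begin{equation*}
\PP\bigl(\lVert G - I\rVert > \delta\bigr)
\leq 2m \exp\!\left(-\frac{c_\delta N}{K_{m,w}}\right),
\end{equation*}
where $c_\delta = (1+\delta)\log(1+\delta) - \delta$ for the upper tail (and the analogous lower-tail exponent for the negative deviation). Choosing $\delta=\tfrac12$ yields the exponent $c_{1/2}=1-\log 2$ that appears in the definition of $\kappa$. Plugging in the hypothesis $K_{m,w}\leq \kappa N/\log N$ with $\kappa=(1-\log 2)/(2+2t)$ and using $2m \leq 2 K_{m,w} \leq N$ absorbs the prefactor into a logarithmic term, and one obtains $\PP\bigl(\lVert G - I\rVert > 1/2\bigr) \leq N^{-t}$. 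This is the main technical step and, in my view, the chief obstacle: the constant $\kappa$ must be tuned against both the Chernoff exponent and the logarithmic prefactor coming from the dimension $m$, and the bookkeeping is delicate.

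Third, I would carry out the deterministic quasi-optimality argument on the event $E:=\{\lVert G - I\rVert \leq 1/2\}$. On $E$, $G$ is invertible with $\lVert G^{-1}\rVert \leq 2$, and using the normal equations together with the identity $\lVert \hat\Lambda_{\calS_m} v - v\rVert_{L^2_\mu}^2 = \lVert \hat\Lambda_{\calS_m} v - v\rVert_{N}^2$ for $v\in\calS_m$, one shows the empirical projection is a near isometry on $\calS_m$ and hence
\begin{equation*}
\lVert g - \hat\Lambda_{\calS_m} g\rVert_{L^2_\mu(\Gamma;\calV)}^2
\lesssim \lVert g - \Lambda_{\calS_m} g\rVert_{L^2_\mu(\Gamma;\calV)}^2 + \lVert g - \Lambda_{\calS_m} g\rVert_N^2,
\end{equation*}
after which taking expectation of the right-hand side yields $\lesssim e_m(g)^2$. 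Finally, I would obtain the stated bound in expectation by combining the conditional estimate on $E$ with a standard truncation/conditioning on $E^c$ (either truncating $\hat\Lambda_{\calS_m} g$ at a prescribed threshold, or using the assumption $g\in L^\infty$ so that the contribution of $E^c$ is controlled by $N^{-t}$ times a mild factor), which produces the factor $1+\epsilon(s)$ and absorbs the rare event into a lower-order term.
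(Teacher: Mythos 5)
The paper does not prove this theorem: it is imported verbatim by citation from \cite{cohen2017optimal}, so there is no internal proof to compare against. Your sketch faithfully reconstructs the argument actually given in that reference --- unbiasedness of the Gramian, a matrix Chernoff/Bernstein bound with deviation $\delta=1/2$ driven by the uniform bound $k_{m,w}\leq K_{m,w}$ on the rank-one summands, quasi-optimality on the event $\{\lVert G-I\rVert\leq 1/2\}$, and a truncation/conditioning step to control the complement (which is indeed necessary, since the raw estimator need not be integrable on the bad event) --- so the approach is the right one. One small slip in the bookkeeping you yourself flag as delicate: the exponent that matches $\kappa=\frac{1-\log 2}{2+2t}$ is the lower-tail Chernoff constant $c_{1/2}=\delta+(1-\delta)\log(1-\delta)\big|_{\delta=1/2}=\frac{1-\log 2}{2}\approx 0.153$, not $1-\log 2$, and not the upper-tail expression $(1+\delta)\log(1+\delta)-\delta$ you quote (which gives $\approx 0.108$); with the correct $c_{1/2}$ the computation $2m\exp(-c_{1/2}N/K_{m,w})\leq N\cdot N^{-c_{1/2}/\kappa}=N^{-t}$ closes exactly as you describe.
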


The above theorem suggests that, to reduce as much as possible the sample size, one should choose the sampling distribution $\nu$ that minimizes $K_{m,w}$.
In this sense, the optimal sampling measure $\nu^\star$ is given by
\begin{equation}
  \frac{1}{w^\star} := \frac{d \nu^\star}{d \mu}
  = \frac{1}{m} \sum_{j=1}^m \phi_j(y)^2,
\end{equation}
which yields the optimal $K_{m,w^\star} = m$.

\begin{example}
  Let $\Gamma=\R$ with Gaussian density $d \mu$, $\calV = \R$, and consider the space of polynomials of degree smaller than $m$, that is $\calS_m = \mathbb{P}_{m-1}$, then we may take $\phi_j = H_{j-1}$, $H_j$ being the $j$-th normalized Hermite polynomial.
  When the sampling measure $d\nu = d\mu$ and $w(x)=1$, it holds that $K_{m,w}$ is unbounded.
  However, if we consider the optimal sampling measure $\frac{1}{m} \sum_{j=0}^{m-1} H_j(y)^2$, drawing $\calO(m \log(m))$ samples is sufficient to achieve quasi-optimal reconstruction error.
\end{example}

\subsection{Polynomial approximation in the active subspaces}
We now turn to the combination of the AS method and weighted least-squares for the full approximation of the target function $f:\Gamma \to \R$.
For simplicity, here we consider the case where $\mu$ is tensorized Gaussian, so that \Cref{asmp:poincare} holds with $C_P = 1$, the projected measure $\mu_V$ on the active variables is a tensorized Gaussian for any $V \in \mathrm{St}(d,r)$, and the conditional measure $\mu_{Z \vert X}(\cdot \vert \bsx)$ is tensorized Gaussian independent of $\bsx \in \Gamma_V$ and coincides with the projected measure $\mu_W$ on the orthogonal to the active subspace, where we defined $W \in \mathrm{St}(d,d-r)$ such that $W W^\adj = (I - V V^\adj)$.
Note that, in the following, we use $W$ repeatedly to denote the subspace corresponding to the inactive variables.
Introduce now the set of multi indices $\calL := \{ \bsnu \in \N^d \,:\, \sup_{j \,:\, \nu_j \neq 0} j < \infty \}$, $\calL_r := \{ \bsnu \in \calL \,:\, \nu_j = 0 \, \forall \, j>r \}$, and $\calL_{>r} := \{ \bsnu \in \calL \,:\, \exists \, j>r \, \mathrm{s.t.}\, \nu_j \neq 0\}$.
We can then define the tensorized Hermite polynomial in the rotated variables $(\bsx, \bsz)$ associated to a multi index $\bsnu \in \calL_r$ as
\begin{equation}
  H_\bsnu(\bsx,\bsz) := \Pi_{j = 1}^r H_{\nu_j}(x_j),
\end{equation}
$H_{\nu}$ being the one-dimensional normalized Hermite polynomial of degree $\nu \in \N$.
Let us introduce a polynomial space $P_{\Xi_{m,r}}$  of dimension $m$ spanned by polynomials constant in the inactive variables, that is  $P_{\Xi_{m,r}} := \mathrm{span}\{ H_{\bsnu} \,:\, \nu \in \Xi_{m,r} \}$ where $\Xi_{m,r} \subset \calL_r$ has cardinality $m$.
Note that for such a polynomial space the optimal sampling measure is given by
\begin{equation}
  \frac{d \nu^\star}{d \mu}(\bsx, \bsz)
  = \frac{d \nu_V^\star}{d \mu_V}(\bsx)
\end{equation}
since the polynomials are constant with respect to the inactive variables.
Hence, once the active subspace $\hat{U}_L$ has been computed as in \Cref{algo:SLAS}, we further draw a pair of i.i.d. samples $\{ \bsx_j \}_{j=1}^{N_L}$ and $\{ \bsz_j \}_{j=1}^{N_L}$ from the optimal measure $\nu_V^\star$ and the Gaussian measure $\mu_W$, respectively.
Then, defining $\hat{W}_L \in \mathrm{St}(d,d-r)$ such that $\hat{W}_L \hat{W}_L^\adj = (I - \hat{U}_L \hat{U}_L^\adj)$, the single-level estimator is defined by
\begin{equation}
  \begin{split}
    \calS_L^{\mathrm{SL}}(f)
    := & \left( \mathrm{argmin}_{v \in P_{\Xi_{m,r}}} \frac{1}{N} \sum_{i=1}^N w(\bsx_i) \left(v(\bsx_i)-f(\hat{U}_L \bsx_i + \hat{W}_L \bsz_i)\right)^2 \right) \circ \hat{U}_L^\adj \\
    =  & \left( \hat{\Lambda}_{P_{\Xi_{m,r}}} f \right) \circ \hat{U}_L^\adj.
  \end{split}
\end{equation}
The single-level error decomposition
\begin{equation}\label{nrt-eq:SLASPA-err-dec}
  \begin{split}
    \norm{f_\infty - \mathcal{S}^{\mathrm{SL}}_L(f_\infty)}_{L^2_\mu(\Gamma)}
     & \leq \norm{f_\infty - f_L}_{L^2_\mu(\Gamma)}
    + C_P \norm{(I - \hat{\Pi}_L) \nabla f_L}_{L^2_\mu(\Gamma)}                                            \\
     & + \norm{g_L^\star -  \hat{\Lambda}_{P_{\Xi_{m,r}}} f_L}_{L^2_{\mu_{\hat{U}_L}}(\Gamma_{\hat{U}_L})}
  \end{split}
\end{equation}
reveals the fact that three errors should be balanced: the spatial approximation error, the SVD error of the active subspace, and the polynomial approximation error in the active subspace.

For the multilevel strategy, we simply repeat the procedure just described at each level of approximation.
More precisely, let us consider a sequence of ranks $r_0 < r_1 < \dots<r_L$ and a sequence of polynomial spaces of sizes $m_0 < m_1 < \dots < m_L$.
Then, once the active subspace $\hat{U}_{L-l,l}$ at level $l$ of size $r_{L-l}$ has been computed as in \Cref{algo:SLAS}, we further draw a pair of $N_{L-l} = \calO(m_{L-l} \log(m_{L-l}))$ i.i.d. samples $\{ \bsx_j \}_{j=1}^{N_{L-l}}$ and $\{ \bsz_j \}_{j=1}^{N_{L-l}}$ from the optimal measure $\nu_V^\star$ and the Gaussian measure $\mu_W$, respectively.
This defines the empirical projector $\hat{\Lambda}_{L-l,L-l} := \hat{\Lambda}_{P_{\Xi_{m_{L-l},r_{L-l}}}}$.
Then, we define the multilevel estimator as
\begin{equation}
  \mathcal{S}_L^{\mathrm{ML}}(f_\infty)
  := \sum_{l =0}^L \left( \hat{\Lambda}_{L-l,L-l} \Delta_{l} \right) \circ \hat{U}_{L-l,l}^\adj,
\end{equation}
with $f_{-1} := 0$.
The full procedure is detailed in \Cref{algo:MLASPA}.
The multilevel error decomposition then reads
\begin{equation}
  \label{nrt-eq:mlaspa-err-decomp}
  \begin{split}
    \norm{f_\infty - \mathcal{S}_L^{\mathrm{ML}}(f_\infty)}_{L^2_\mu(\Gamma)}
     & \leq \norm{f_\infty - f_L}_{L^2_\mu(\Gamma)}
    + C_P \sum_{l = 0}^L \norm{(I-\hat{\Pi}_{L-l,l}) \Deltag_l}_{L^2_\mu(\Gamma)}                                         \\
     & + \sum_{l=0}^L \norm{g_l^\star -  \hat{\Lambda}_{{L-l,L-l}} \Delta_l}_{L^2_{\mu_{\hat{U}_L}}(\Gamma_{\hat{U}_L})}.
  \end{split}
\end{equation}

\begin{algorithm}[h]
  \caption{Multilevel Active Subspaces method with polynomial approximation (\textbf{MLASPA})}
  \label{algo:MLASPA}
  \Input{Functions $(f_0,\dots,f_L)$, subspaces dimensions $(r_0,...,r_L)$, numbers of samples $(M_0,...,M_L)$, index sets $(\Xi_{m_0, r_0}, \dots,\Xi_{m_L,r_L})$}

  Initialize approximation $\mathcal{S}_L^{\mathrm{ML}}(f_\infty) \gets 0$. \\
  \For{$l=0,\dots,L$}{
  Draw $M_{L-l}$ samples $\{\bsy_i\}_{i =1}^{M_{L-l}} \stackrel{\mathrm{iid}}{\sim} \mu$ and compute sample covariance
  \begin{equation}
    \hat{C}_{L-l,l} \gets \frac{1}{M_{L-l}} \sum_{i=1}^{M_{L-l}} \Deltag_l (\bsy_i) \otimes \Deltag_l (\bsy_i).
  \end{equation} \\
  Compute truncated spectral decomposition $
    \hat{U}_{L-l,l}, \hat{\Sigma}_{L-l,l}^2 \gets \mathrm{SVD}(\hat{C}_{L-l,l},r_{L-l})
  $. \\

  Draw a pair of $N_{L-l} = \calO(m_{L-l} \log(m_{L-l}))$ i.i.d. samples $\{ \bsx_j \}_{j=1}^{N_{L-l}}$ and $\{ \bsz_j \}_{j=1}^{N_{L-l}}$ from the optimal measure $\nu_{\hat{U}_{L-l,l}}^\star$ and the Gaussian measure $\mu_{\hat{W}_{L-l,l}}$. \\

  Solve the discrete least squares problem
  \begin{equation}
    g_l
    \gets \argmin_{v \in P_{\Xi_{m_{L-l},r_{L-l}}}} \frac{1}{N_{L-l}} \sum_{i=1}^{N_{L-l}} w(\bsx_i) \left(\Delta_l(\hat{U}_{L-l,l} \bsx_i + \hat{W}_{L-l,l} \bsz_i) - v(\bsx_i)\right)^2.
  \end{equation} \\

  Update approximation $\mathcal{S}_L^{\mathrm{ML}}(f_\infty) \gets \mathcal{S}_L^{\mathrm{ML}}(f_\infty) + g_l \circ \hat{U}_{L-l,l}$. \\
  }
  \Output{Approximation $\mathcal{S}_L^{\mathrm{ML}}(f_\infty)$.}
\end{algorithm}

\subsection{An adaptive algorithm} \label{nrt-nrt-sec:amlaspa}
In order to choose $L$, $(r_0,\dots,r_L)$, $(M_0,\dots,M_L)$, and $(\Xi_{m_0, r_0}, \dots,\Xi_{m_L,r_L})$ to run \Cref{algo:MLASPA} -- that is, in order to balance all error contributions involved -- we need to know a priori estimates for the strong subspace approximation error, the spatial discretization error associated to $f_{\infty} \approx f_l$, the cost per evaluation of $f_l$ and $\nabla f_l$, and we need to be able to design an adequate sequence of polynomial spaces to use in each subspace $\hat{U}_{L-l,l}$.
This is not always possible in practice, hence, in this section we introduce an adaptive Multilevel Active Subspaces algorithm with polynomial approximation (AMLASPA), which does not require the knowledge of the aforementioned information.

As far as the polynomial approximation is concerned, we assume we have a routine $\texttt{Polyapprox}(h,V,\mathrm{tol})$ which, given a function $h \in L^2_\mu(\Gamma)$, an orthogonal matrix $V \in \mathrm{St}(r,d)$, and a tolerance $\mathrm{tol}$, adaptively builds and returns a polynomial approximant of $\Lambda_{L^2_\mu(\Gamma)/V} h$ on the $r$ variables defined by $\bsx = V^\adj \bsy$ whose $L^2_{\mu_V}(\Gamma_V)$-error is below the prescribed tolerance $\mathrm{tol}$, along with the work needed for its construction (e.g., proportional to the number of evaluations of $h$ used to fit the polynomial).
Furthermore, we assume that \texttt{Polyapprox} can take a warm start, that is, we can start building the polynomial approximant from an existing index set.
See \cite{hajiali2020multilevel} for more details on how such a function may be constructed.
Then, when we use this function in the AMLASPA, if we have already built a polynomial approximant based on parameters $(\Delta_l,V,\mathrm{tol})$ at an earlier iteration, and we wish to build a polynomial approximant with parameters $(\Delta_l,V',\mathrm{tol}')$ with $\mathrm{dim}(V')>\mathrm{dim}(V)$ and $\mathrm{tol}' < \mathrm{tol}$, we assume we build the polynomial approximant starting from the old one as initial condition, so that we do not waste any computational effort.

To construct the sequence of active subspaces in an adaptive fashion, the algorithm constructs a (finite) downward-closed index set $\calI \subset \N^2$.
Given such a set and chosen a priori a function $r:\N \to \N$, we let $\hat{V}_{k,l}$ be the subspace spanned by the eigenvectors of the empirical covariance of $\Deltag_l$ from position $r(k-1)+1$ to $r(k)$ and $\hat{U}_{k,l} := \cup_{i=0}^k \hat{V}_{k,l}$.
Let further
\begin{equation}
  \mathcal{A}(k,l) :=
  \begin{cases}
    \{(k+1,l)\}                         & \text{if $(k,l) \neq (0,l_{\mathrm{max}})$} \\
    \{(k+1,l), (0,l_{\mathrm{max}}+1)\} & \text{if $(k,l) = (0,l_{\mathrm{max}})$}    \\
  \end{cases}
\end{equation}
denote the set of admissible multi-indices that are neighboring $(k,l) \in \calI$, where $l_{\mathrm{max}} := \max \{l \,:\, (0,l) \in \calI\}$.
For each multi-index $(k,l) \in \calI$, we estimate the norm of the projection of $\nabla \Delta_l$ onto $\hat{V}_{k,l}$ via $\sqrt{\sum_{j=r(k-1)+1}^{r(k)} \hat{\sigma}^2_{l,j}}$, which we also use to update an estimate of the $\mathrm{SVD}$ error at level $l$.
This norm represents the gain that was made by adding $(k,l)$ to $\calI$.
Furthermore, we estimate the $\mathrm{SVD}$ work that adding this multi-index incurred.
The work, which we denote through a function $\work: \N^2 \to \R$, could be estimated directly using a timing function, or it can be based on a work model, e.g. the product of work per gradient evaluation times number of evaluations needed.
Then, we call $\texttt{Polyapprox}$ to perform polynomial approximation at level $l$ on the variables defined by $\hat{U}_{k,l}$, with tolerance matching the SVD error.
Note that in the work associated to $(k,l)$ we add the extra work needed to fit the polynomial (we say extra to allow for a warm start of the polynomial space).
Furthermore, we shall choose a function for the number of samples $M:\N \to \R$ by, e.g., linking it to the rank function.
We suggest as in \cite{constantine2015active} to use $M(l) = C r(l) \log(r(l)+1)$ for some constant $C>1$.
With these ingredients, we can construct $\calI$ similarly to \cite{hajiali2020multilevel}.
We simply start with $\calI = \{(0,0)\}$, then, for every iteration of our algorithm, we find the index $(k,l) \in \calI$ which has a non-empty set $\calA(k,l)$ of admissible neighbors and which maximizes the ratio between the gain and work estimates.
Finally, we add those neighbors to the set $\calI$ and repeat.

\section{Numerical experiments}
Let us consider the following linear elliptic second order PDE
\begin{equation}
  \label{nrt-eq:model-prob}
  \begin{aligned}
    -\mathrm{div}(a \nabla u) & = h &  & \text{on $D \subset \R^n$}, \\
    u                         & = 0 &  & \text{on $\partial D$},
  \end{aligned}
\end{equation}
where $a:D \times \Gamma \to \R$ is the diffusion coefficient.
We assume that the diffusion coefficient $a$ is a random function defined as $a = \exp(b)$, where $b$ is a centered Gaussian process defined on $D$.
In particular, we focus on the case where $\Gamma = \R^d$, $a = a(\bsy) = \exp(b(\bsy))$ and $b(\bsy)$ is expanded in parametric form as
\begin{equation}
  \label{nrt-eq:diffusion-aff-exp}
  b(\bsy) = \bar{b} + \sum_{j=1}^d y_j \psi_j,
\end{equation}
where the $y_j$'s are standard normal random variables and the functions $\psi_j: D \to \R$ are given.
Let us remark that here $d \in \N \cup \{\infty\}$, that is the number of parameters $\bsy$ in the model, may in theory be infinite.
For any given $\bsy \in \Gamma$ such that $b(\bsy)\in L^\infty(D)$, Lax-Milgram theory allows us to define the solution $u(\bsy)$ in the space $V \coloneq H^1_0(D)$ through the variational formulation
\begin{equation}
  \int_D a(\bsy) \nabla u \cdot \nabla v = \int_D h v, \qquad v \in V.
\end{equation}
The solution map is only defined on the set
\begin{equation}
  \Gamma_0 \coloneq \{\bsy \in \Gamma \; : \; b(\bsy)\in L^\infty(D) \},
\end{equation}
which is equal to $\Gamma$ in the case of finitely many variables, that is, when $\psi _j=0$ for $j>J$ for some $J \in \N$.
However, in both finitely or countably many variable cases, the solution map is not uniformly bounded.
\begin{theorem}{\cite{bachmayr2017sparse}} \label{thm:logn-well-posed}
  Assume that there exists a sequence $(\rho _j)_{j\geq 1}$ of positive numbers such that
  \begin{equation}
    \sup _{x\in D}\sum _{j\geq 1} \rho _j |\psi _j(x)| =K <\infty,
  \end{equation}
  holds and $\sum _{j\geq 1} \exp (-\rho _j^2)<\infty$.
  Then, $\Gamma_0$ has full measure and $\mathbb{E}(\exp (k\|b\|_{L^\infty }))<\infty$ for all $k<\infty$.
  In turn, the map $\bsy \mapsto u(\bsy)$ belongs to the Bochner space $L^k_\mu(\Gamma;V)$ for all $k<\infty$.
\end{theorem}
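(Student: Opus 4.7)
The statement has three separate assertions: (i) $\Gamma_0$ has full measure, (ii) $\exp(k\|b\|_{L^\infty})$ is integrable for every $k$, and (iii) $\bsy \mapsto u(\bsy)$ is in every Bochner space $L^k_\mu(\Gamma;V)$. The plan is to reduce (i) and (ii) to a single quantitative tail bound for a weighted supremum of the standard Gaussians, and then deduce (iii) from the Lax--Milgram energy estimate by absorbing the worst-case diffusion into the exponential of $\|b\|_{L^\infty}$.

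The first step is to decouple the $x$- and $\bsy$-dependence in $b$. Writing
\begin{equation*}
|b(x,\bsy) - \bar b(x)| \leq \sum_{j\geq 1} |y_j|\,|\psi_j(x)| = \sum_{j\geq 1} \frac{|y_j|}{\rho_j} \cdot \rho_j|\psi_j(x)| \leq K\,M(\bsy),
\end{equation*}
with $M(\bsy) := \sup_{j\geq 1} |y_j|/\rho_j$, reduces everything to controlling $M$. From this one-line bound, $\Gamma_0 \supseteq \{M<\infty\}$, and for (ii) it suffices to prove $\mathbb{E}\exp(kM) < \infty$ for every $k$. The hypothesis $\sum_j e^{-\rho_j^2}<\infty$ forces $\rho_j \to \infty$, which I will use freely below.

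For (i) and (ii) I would carry out a direct union bound on the Gaussian tails: for each $t>0$,
\begin{equation*}
\mathbb{P}(M>t) \leq \sum_{j\geq 1} \mathbb{P}(|y_j|>t\rho_j) \leq 2\sum_{j\geq 1} e^{-t^2\rho_j^2/2}.
\end{equation*}
Since $\rho_j\to\infty$, only finitely many $\rho_j$ are below $1$; for the tail indices with $\rho_j\geq 1$ I would use the factorization $e^{-t^2\rho_j^2/2} = e^{-\rho_j^2}\cdot e^{-\rho_j^2(t^2/2-1)} \leq e^{-\rho_j^2}\cdot e^{-(t^2/2-1)}$ valid for $t\geq \sqrt 2$, producing a bound of the form $\mathbb{P}(M>t)\lesssim e^{-t^2/2}$ for large $t$. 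Then Borel--Cantelli yields $M<\infty$ almost surely (hence (i)), and the Gaussian-type tail yields
\begin{equation*}
\mathbb{E}\exp(kM) = \int_0^\infty ke^{kt}\,\mathbb{P}(M>t)\,dt < \infty
\end{equation*}
for every $k$, which combined with the linear bound $\|b\|_{L^\infty}\leq \|\bar b\|_{L^\infty}+KM$ gives (ii).

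Finally, (iii) follows from a $\bsy$-wise Lax--Milgram estimate. For $\bsy\in\Gamma_0$, the diffusion satisfies $a(\bsy)\geq \exp(-\|b(\bsy)\|_{L^\infty})$ on $D$, so coercivity of the bilinear form gives
\begin{equation*}
\|u(\bsy)\|_V \leq C_D\,\|h\|_{V^\ast}\,\exp\bigl(\|b(\bsy)\|_{L^\infty}\bigr),
\end{equation*}
with $C_D$ depending only on the Poincar\'e constant of $D$. Raising to the $k$-th power and integrating yields $\mathbb{E}\|u\|_V^k \lesssim \mathbb{E}\exp(k\|b\|_{L^\infty}) < \infty$ by (ii). The only delicate point is step two: making the tail estimate of $M$ robust enough to survive multiplication by $e^{kt}$ for \emph{every} $k$, which is precisely where the stronger hypothesis $\sum e^{-\rho_j^2}<\infty$ (rather than, say, summability of $e^{-c\rho_j^2}$ for a single $c$) is used to give $M$ a Gaussian-type tail.
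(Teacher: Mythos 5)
The paper states this result without proof, citing \cite{bachmayr2017sparse}, and your argument is correct and reproduces the standard proof from that reference: the reduction $\|b-\bar b\|_{L^\infty}\le K\sup_{j}|y_j|/\rho_j$, the union bound on Gaussian tails (where $\sum_j e^{-\rho_j^2}<\infty$ is precisely what makes the series summable once $t\ge\sqrt{2}$), and the pathwise Lax--Milgram estimate integrated against the exponential moment. The only cosmetic imprecision is that the finitely many indices with $\rho_j<1$ yield a tail of order $e^{-ct^2}$ with $c=\tfrac12\min(1,\min_j\rho_j^2)$ rather than $e^{-t^2/2}$, which changes nothing since any Gaussian-type tail beats $e^{kt}$ for every $k$.
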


Introduce now a smooth quantity of interest $\qoi: V \to \R$ and denote $\qoi'(u)$ its Fréchet differential at $u \in V$.
Then, we have that the function of interest is
\begin{equation}
  f(\bsy) := \qoi(u(\bsy)).
\end{equation}
Note that it is possible to show that the gradient of $f$ at $\bsy \in \Gamma$ is in $L^2_\mu(\Gamma;\ell^2(d))$ \cite{bachmayr2017sparse}.

We ran our experiments based on model \eqref{nrt-eq:model-prob}-\eqref{nrt-eq:diffusion-aff-exp} where $h=0$, $D = [0,1]^2$, $\bar{b} = -4.6$, $\psi_{2j} = \frac{1}{j^\alpha} \cos(j \pi x_1)$, $\psi_{2j+1} = \frac{1}{j^\alpha} \sin(j \pi x_2)$, and $\qoi$ is the spatial average.

In the first pair of plots in \Cref{fig:proj-err}, we show a decay in the projection error, indicating that this example provides fertile ground for applying the AS technique.
In the right plot, which illustrates the projection error of the gradient of the differences, we see that increasing the level of approximation leads to a decrease in error.
However, the rate of decay with respect to the rank remains consistent across levels, meaning that $\nabla f$ features mixed regularity in physical and parameter space. Interestingly, the rate of decay with respect to the rank deteriorates when transitioning from functions to differences, indicating that the differences feature less regularity that the functions themselves.

In the second pair of plots, the left plot demonstrates the computational complexity of SLAS and MLAS under ideal reconstruction (\Cref{algo:SLAS,algo:ideal-MLAS}, respectively), estimated with a model where the computational work is proportional to the number of evaluations of $\nabla \Delta_l$ and $\Delta_l$ used for the computation of the active subspaces and the polynomial approximations, respectively.
The complexity of MLAS aligns with the predictions of \Cref{thm:mlasir-complexity}, showing it to be smaller than that of SLAS.
The right plot presents the complexity of adaptive polynomial approximation algorithms.
Again, the multilevel strategy outperforms the single-level approach, as well as the multilevel polynomial approximation algorithm from \cite{hajiali2020multilevel}.

\begin{figure}[h]
  \centering
  \includegraphics[width=0.45\textwidth]{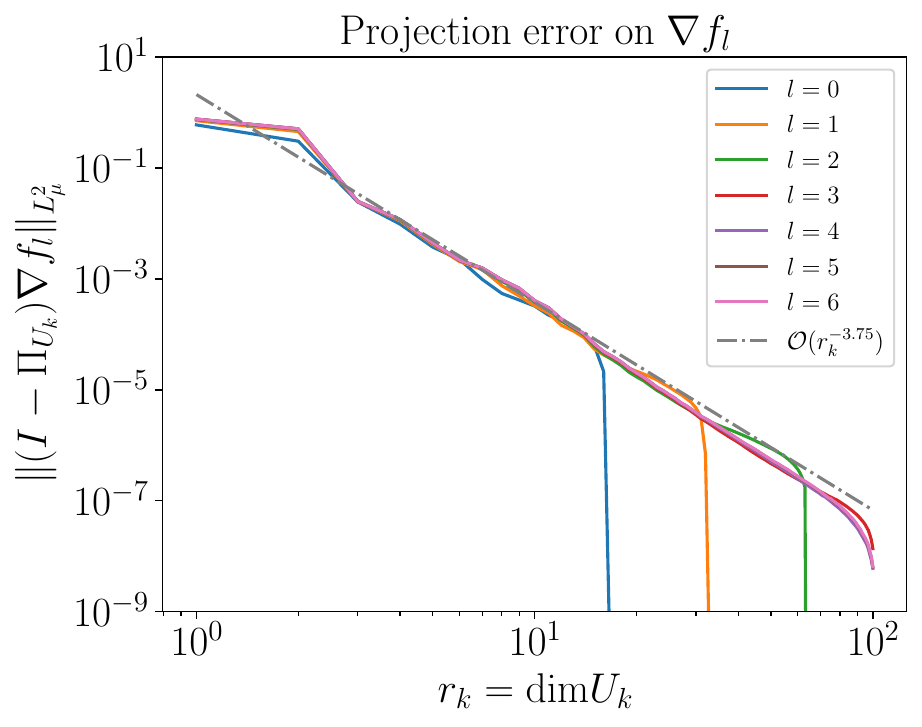}
  \includegraphics[width=0.45\textwidth]{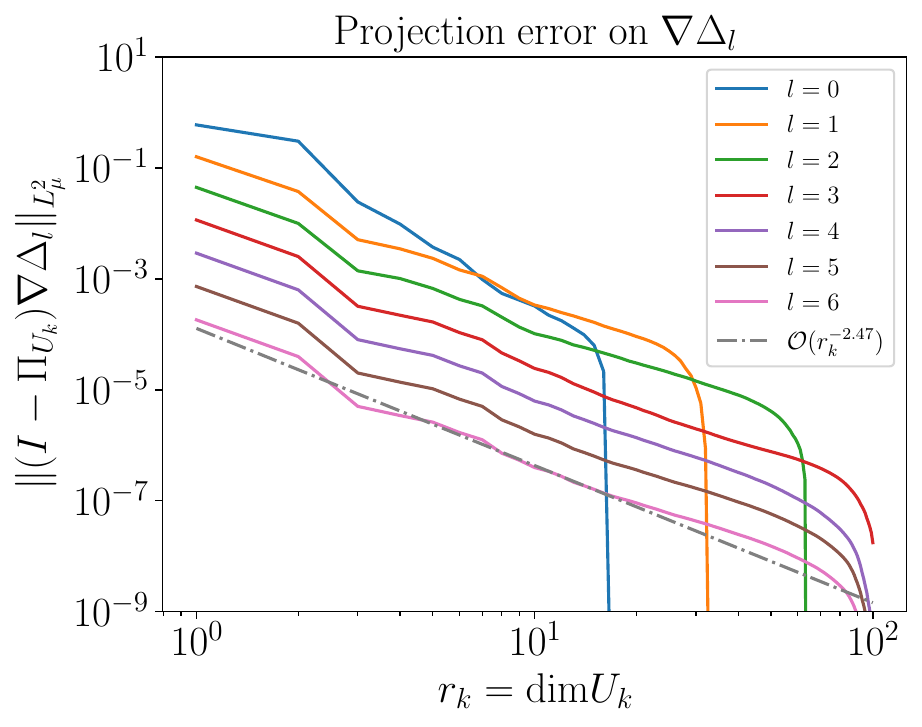}
  \caption{Projection error for $d=100$ and $\alpha = 2$ across different levels for $\nabla f_l$ (left) and $\nabla \Delta_l$ (right).}
  \label{fig:proj-err}
\end{figure}

\begin{figure}[h]
  \centering
  \includegraphics[width=0.45\textwidth]{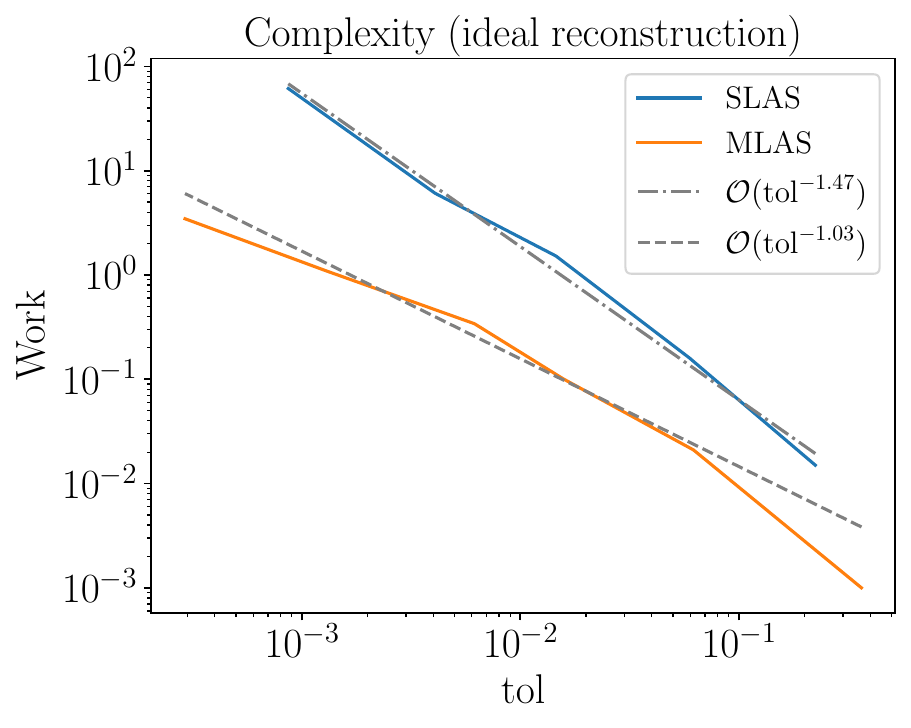}
  \includegraphics[width=0.47\textwidth]{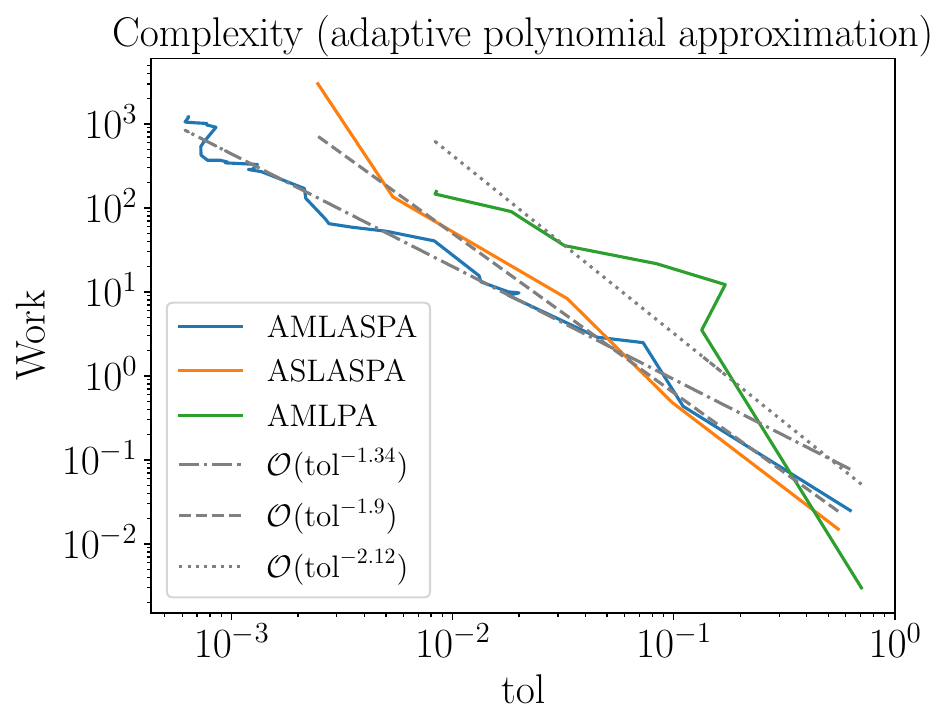}
  \caption{Left: Complexity of \Cref{algo:SLAS,algo:ideal-MLAS}. Right: complexities of single-level AS with polynomial approximation (ASLASPA), Algorithm AMLASPA described in Section \ref{nrt-nrt-sec:amlaspa}, and the adaptive multilevel polynomial approximation algorithm (AMLPA) proposed in \cite{hajiali2020multilevel}.}
\end{figure}

\section{Conclusions}
In this work, we introduced the Multilevel Active Subspaces (MLAS) method, a novel framework that combines the dimensionality reduction capabilities of the Active Subspace (AS) approach with the computational efficiency of the multilevel paradigm.
The MLAS method is designed for scenarios where approximations $f_l \approx f$ of a target map $f$ are available at varying discretization levels $l$, with the cost of evaluating $f_l$ and its gradient $\nabla f_l$ increasing with $l$.
By hierarchically distributing computational resources across discretization levels, MLAS achieves efficiency through a level-dependent reduction in active subspace dimension: at finer levels, where evaluation costs are highest, smaller active subspaces are constructed to minimize the number of gradient evaluations required.
We conducted a theoretical analysis of the MLAS algorithm with ideal reconstruction functions in the active subspaces, demonstrating that it reduces computational complexity compared to single-level AS methods, provided that the maps $f_l$ satisfy certain smoothness and regularity assumptions.
We also proposed a practical algorithm, which constructs polynomial approximations of the ideal reconstruction functions on the active subspaces.
This approach employs optimally weighted discrete least-squares, yielding a fully implementable algorithm for multilevel function approximation.
An adaptive version of this algorithm was also proposed, enabling the simultaneous construction of active subspaces and polynomial approximants at each level without requiring explicit knowledge of model parameters.
Numerical experiments validated the effectiveness of MLAS on a linear elliptic PDE with log-normal random diffusion coefficients.
These experiments demonstrated the method's ability to significantly reduce computational costs while maintaining accuracy, outperforming single-level methods.
The results confirm the applicability of MLAS to parametric PDEs with sufficiently smooth solution maps, showcasing its potential as a tool for high-dimensional approximation problems.

\bibliographystyle{spmpsci}
\bibliography{biblio.bib}

\end{document}